\let\mathbbm\mathbf
\newcommand{\widehatT}{P}
\newcommand{\nomean}[1]{\widetilde{#1}}
\newcommand{\pnorm}[1]{\left\|#1\right\|_{{L^p}}}
\newcommand{\lipnorm}[1]{\left\|#1\right\|_{\operatorname{Lip}}}
\newcommand{\infnorm}[1]{|#1|_{{L^\infty}}}
\newcommand{\upn}{^{(n)}}
\newtheorem{thm}{Theorem}[section]
\newtheorem{prop}[thm]{Proposition}
\newtheorem{defn}[thm]{Definition}
\newtheorem{examp}[thm]{Example}
\newtheorem{rmk}[thm]{Remark}
\numberwithin{equation}{section}
\let\epsilon\varepsilon
\let\phi\varphi
\let\bar\widebar
\newcommand{\C}{\mathbb{C}}
\newcommand{\N}{\mathbb{N}}
\newcommand{\R}{\mathbb{R}}
\newcommand{\one}{\mathbf{1}}
\begin{document}

\title{ A note on large deviations for unbounded observables \thanks{The
    authors would like to thank Vaughn Climenhaga for helpful
    discussions.}}

\author{Matthew Nicol \thanks{Department of Mathematics, University of
    Houston, Houston, USA. e-mail: $\langle$nicol@math.uh.edu$\rangle$. MN
    thanks the NSF for partial support on NSF-DMS Grant 1600780.} \and
  Andrew T\"or\"ok \thanks{Department of Mathematics, University of
    Houston, Houston Texas, USA. e-mail:
    $\langle$torok@math.uh.edu$\rangle$. AT thanks the NSF for partial
    support on NSF-DMS Grant 1816315. } \thanks{Institute of Mathematics of
    the Romanian Academy, Bucharest, Romania.}}


\date{February 13, 2019} 

\maketitle

\tableofcontents

\begin{abstract}

  We consider exponential large deviations estimates for unbounded
  observables on uniformly expanding dynamical systems. We show that
  uniform expansion does not imply the existence of a rate function for
  unbounded observables no matter the tail behavior of the cumulative
  distribution function. We give examples of unbounded observables with
  exponential decay of autocorrelations, exponential decay under the
  transfer operator in each $L^p$, $1\le p < \infty$, and strictly
  stretched exponential large deviation. For observables of form
  $|\log d(x,p)|^{\alpha}$, $p$ periodic, on uniformly expanding systems we
  give the precise stretched exponential decay rate. We also show that a
  classical example in the literature of a bounded observable with
  exponential decay of autocorrelations yet with no rate function is
  degenerate as the observable is a coboundary.

\end{abstract}

\section{Introduction and background.}\label{sec:intro}


  Suppose $(T, X,\mu)$ is  a probability preserving transformation and $\varphi: X\to \mathbb R$ is a mean-zero integrable function i.e.\
$\mathbb{E}(\varphi ):=\int_{X} \varphi~d\mu =0$. 
Throughout this paper we will write $S_n (\varphi ): =\varphi +\varphi\circ T +\ldots + \varphi \circ T^{n-1} $
for the $n$th ergodic sum of $\varphi$. Sometimes we will write $S_n$ instead
of $S_n(\varphi)$ for simplicity of notation or when $\varphi$ is clear from context.
If we put $\bar{\varphi}=\int \varphi d\mu$ then ergodicity implies 
$\lim_{n\to \infty}\frac{1}{n} S_n (\varphi) =\bar{\varphi}$.



Large deviations theory concerns the rate of convergence of
$\frac{1}{n} S_n (\varphi) $ to $\bar{\varphi}$. 
%

If for each $\epsilon>0$ there exists
$C>0$ and $0<\theta<1$ such that for all $n\ge 0$
\[
\mu (|\frac{1}{n}S_n (\varphi)-\bar\varphi| > \epsilon)\le C \theta^n
\]
we will say $S_n (\varphi)$ has \emph{large deviations with an exponential
  rate}.

If for each $\epsilon>0$ there exists
$C>0$ and $0<\gamma <1$ such that for all $n\ge 0$
\[
\mu (|\frac{1}{n}S_n (\phi)-\bar\phi| > \epsilon)\le C e^{-n^{\gamma}}
\]
we say $S_n (\phi)$ has \emph{large deviations with a stretched exponential
  rate of order $\gamma>0$}.

We now recall the definition of rate function and some other notions of large deviations theory.   
\begin{defn} A mean-zero integrable function $\varphi:\Omega\to \mathbb R$
  is said to satisfy a large deviation principle with rate function
  $I(\alpha)$, if there exists a non-empty neighborhood $U$ of $0$ and a
  strictly convex function $I:U\to \mathbb R$, non-negative and vanishing
  only at $\alpha=0$, such that
\begin{eqnarray}\label{rate+}
\lim_{n\to\infty} \frac 1n\log \mu (S_n (\varphi)  \ge n \alpha)& =& -I(\alpha)
\end{eqnarray}
for all $\alpha>0$ in $U$ and
\begin{eqnarray}\label{rate-}
 \lim_{n\to\infty} \frac 1n\log \mu (S_n (\varphi)  \le n \alpha) &=& -I(\alpha)
 \end{eqnarray}
for all $\alpha<0$ in $U$.
\end{defn}

In the literature this is referred to as a first level or local (near the average) large deviations principle.


 If $\varphi$ is a mean-zero continuous observable on an SRB attractor then
 $\varphi$ has exponential large deviations (see~\cite[Theorem 2
 (2)]{LSYoung-TAMS1990}).
 For mean-zero H\"older observables on Young Towers with exponential tails
%
 (we refer to~\cite{LY98} or~\cite[Sections 2 and 4]{Nic} for the
 definition)
 which are not $L^1$ coboundaries in the sense that
 $\varphi\not = \psi\circ T-\psi$ for any $\psi\in L^1 (\mu)$) such an
 exponential large deviations result holds with rate function
 $I_{\varphi} (\alpha)$~\cite{Nic,Reybellet_Young}. A formula for the width
 of $U$ is given in \cite{Reybellet_Young} following a standard approach
 but it is not useful in concrete estimates.

\section{Erd\H{o}s-R\'enyi laws: background.}\label{erdoslaw}
Erd\H{o}s-R\'enyi laws~\cite{Erd} give estimates on the size of
time-windows over which we should expect average to achieve nontrivial
almost sure limit laws.



Proposition~\ref{prop:erdos1} below is found in a proof from Erd\H{o}s and
R\'enyi~\cite{Erd} (see \cite[Theorem 2.4.3]{Csorgo_Revesz},
Grigull~\cite{Gri}, Denker and Kabluchko~\cite{Den1} or~\cite{Denker} where
this method has been used). The Gauss bracket $[.]$ denotes the integer
part of a number. Throughout the proofs of this paper we will concentrate
on the case $\alpha>0$ as the case $\alpha<0$ is identical with the obvious
modifications of statements.

\begin{prop}[\cite{Erd,Denker}]\label{prop:erdos1}

(a) Suppose that $\varphi$ satisfies a large deviation principle with rate  function $I$ defined on the open set $U$. Let $\alpha >0$, $\alpha \in U$ and set 
$$ \ell_n=\ell_n(\alpha)=\left[\frac{\log n}{I(\alpha)}\right]\qquad n\in\mathbb N.$$
Then the upper Erd\H{o}s-R\'enyi law holds, that is, for $\mu$ a.e. $x\in X$ 
$$
 \limsup_{n\to\infty} \max_{0\le j\le n-\ell_n}\frac1{\ell_n}S_{\ell_n} (\varphi) \circ T^j (x)\le \alpha.
 $$

\noindent (b) If  for some constant $C>0$ and integer $\tau\ge 0$ for each interval $A$
\begin{eqnarray}\label{tau}
 \mu\!\left(\bigcap_{m=0}^{n-\ell_n}\{S_{\ell_n} (\varphi) \circ T^m\in A\}\right)&\le &C [\mu (S_{\ell_n}\in A)]^{n/(\ell_n)^{\tau}}
 \end{eqnarray}
then the lower Erd\H{o}s-R\'enyi law holds as well, that is, for $\mu$  a.e. $x\in X$ 
$$ 
\liminf_{n\to\infty} \max_{0\le j\le n-\ell_n}\frac1{\ell_n}S_{\ell_n} (\varphi) \circ T^j \ge \alpha.
$$
\end{prop}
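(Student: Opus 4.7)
Both halves are Borel--Cantelli arguments; (a) needs only the LDP upper bound, while (b) couples the mixing hypothesis \eqref{tau} to the LDP lower bound.

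For (a), fix $\epsilon>0$ small enough that $\alpha+\epsilon\in U$. By $T$-invariance of $\mu$ and a union bound over $j$,
\[
\mu\!\left(\max_{0\le j\le n-\ell_n}\tfrac{1}{\ell_n}S_{\ell_n}(\varphi)\circ T^j > \alpha+\epsilon\right) \le n\,\mu\bigl(S_{\ell_n}(\varphi) > \ell_n(\alpha+\epsilon)\bigr).
\]
The upper bound in \eqref{rate+} gives, for any $\eta>0$ and $n$ large, $\mu(S_{\ell_n}(\varphi)>\ell_n(\alpha+\epsilon))\le\exp(-\ell_n(I(\alpha+\epsilon)-\eta))$; by the definition of $\ell_n$ this is at most $c\,n^{-(I(\alpha+\epsilon)-\eta)/I(\alpha)}$ for a constant $c$ absorbing the floor in $\ell_n$. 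Strict convexity of $I$ with $I(0)=0$ forces $I$ to be strictly increasing on the positive part of $U$, so $I(\alpha+\epsilon)>I(\alpha)$, and for $\eta$ small the whole bound is $O(n^{-\delta})$ for some $\delta>0$. Since this need not be summable along all $n$, I restrict to the geometric subsequence $n_k=2^k$, where it becomes $O(2^{-k\delta})$; Borel--Cantelli yields the claim along $n_k$, and for $n_k\le n<n_{k+1}$ one interpolates using that $\ell_n$ and $\ell_{n_{k+1}}$ differ by $O(1)$ and that the max over a longer window with a slightly larger window size dominates the max at window $\ell_n$ up to a negligible shift.

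For (b), fix $\alpha'\in(0,\alpha)\cap U$ and apply \eqref{tau} with the half-line $A=(-\infty,\ell_n\alpha')$:
\[
\mu\!\left(\max_{0\le j\le n-\ell_n}\tfrac{1}{\ell_n}S_{\ell_n}(\varphi)\circ T^j < \alpha'\right) \le C\bigl[\mu(S_{\ell_n}(\varphi)<\ell_n\alpha')\bigr]^{n/\ell_n^\tau}.
\]
The LDP lower bound in \eqref{rate+} yields $\mu(S_{\ell_n}(\varphi)\ge\ell_n\alpha')\ge\exp(-\ell_n(I(\alpha')+\eta))\ge n^{-(I(\alpha')+\eta)/I(\alpha)}$ for any $\eta>0$ and $n$ large, so $\mu(S_{\ell_n}(\varphi)<\ell_n\alpha')\le 1-n^{-(I(\alpha')+\eta)/I(\alpha)}$. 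Using $(1-x)^N\le e^{-Nx}$ yields
\[
\mu\!\left(\max_{0\le j\le n-\ell_n}\tfrac{1}{\ell_n}S_{\ell_n}(\varphi)\circ T^j < \alpha'\right) \le C\exp\!\left(-\frac{n^{1-(I(\alpha')+\eta)/I(\alpha)}}{(\log n/I(\alpha))^\tau}\right).
\]
Since $\alpha'<\alpha$ implies $I(\alpha')<I(\alpha)$, for $\eta$ small the exponent is a positive power of $n$ divided by a polylog, hence the bound is summable in $n$. Borel--Cantelli gives $\liminf_n \max_j \tfrac{1}{\ell_n}S_{\ell_n}(\varphi)\circ T^j\ge\alpha'$ almost surely, and letting $\alpha'\uparrow\alpha$ through a countable sequence concludes (b).

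The main obstacle is the subsequence--interpolation step in (a): the LDP gain only beats the union-bound factor $n$ by a small power $n^{-\delta}$ rather than $n^{-1-\delta}$, so summability has to be arranged along a geometric subsequence and then transferred to all $n$ via monotonicity of $\ell_n$. In (b) the technically delicate point is only the bookkeeping: the almost-linear exponent $n/\ell_n^\tau$ in \eqref{tau} easily absorbs the polylogarithmic loss, so the (typically weaker) LDP lower bound already produces a superpolynomially small probability.
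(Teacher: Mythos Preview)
Your argument is correct and follows essentially the same route as the paper's (draft) proof: in (a) a union bound plus the LDP upper bound gives a $O(n^{-\delta})$ estimate, which is made summable along a sparse subsequence and then interpolated; in (b) the mixing hypothesis \eqref{tau} combined with the LDP lower bound yields a stretched-exponential tail that is directly summable. The only cosmetic difference is your choice of a geometric subsequence $n_k=2^k$ in place of the paper's polynomial $n_k=k^d$; both work, though with $2^k$ the gap $\ell_{n_{k+1}}-\ell_{n_k}\approx \log 2/I(\alpha)$ is a bounded constant rather than eventually $\le 1$, so the interpolation step needs to handle finitely many window lengths per block instead of just one or two---your sketch of that step is a little brisk but the point is standard.
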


\begin{rmk}
If both Assumptions (a) and (b) of Proposition~\ref{prop:erdos1} hold then 
\[
\lim_{n\to \infty}  \max_{0\le m\le n-\ell_n} \frac{S_{\ell_n}\circ T^m}{\ell_n}=\alpha.
\]
\end{rmk} 
\begin{rmk}
Note that regularity of $\varphi$ is not needed for the proof of Proposition~\ref{prop:erdos1} and that Proposition~\ref{prop:erdos1}~$(a)$ applies to unbounded observables.
\end{rmk} 



\section{Rate functions and unbounded observables.}

Alves et al~\cite{AFLV} have shown that if $T$ is a $C^{1+\delta}$ local
diffeomorphism of an interval which has stretched exponential decay of
correlations for H\"older functions versus bounded functions then there
exists a Young Tower for $T$ with stretched exponential tails. It is
unknown whether \emph{exponential} decay of correlations for H\"older
functions versus bounded functions implies there exists a Young Tower for
$T$ with \emph{exponential} tails in the same setting.

The proofs of ~\cite{Nic,Reybellet_Young} of exponential large deviations
with a rate function for a H\"older function $\varphi$ on a dynamical
system modeled by a Young Tower with exponential tails use spectral
techniques. It is necessary to establish the analyticity of the linear
operator $P_z:\mathcal{B}\to\mathcal{B}$, $z\in \C$, defined by
$P_zv=P(e^{z\phi}v)$ where $P$ is a transfer operator with spectral gap on
a Banach space of functions $\mathcal{B}$. This method of proof fails if
$\phi$ is unbounded. We remark that recently the almost sure invariance
principle has been proved for certain classes of unbounded functions on
both uniformly expanding maps and intermittent type maps with a neutral
fixed point~\cite{dedecker_gouezel_merlevede2010}. In this section we give
an example to show that we cannot expect exponential large deviations with
a rate function for unbounded observables on uniformly expanding maps, even
if they satisfy the monotonicity and moment conditions
of~\cite{dedecker_gouezel_merlevede2010} and the tails of the  cumulative distribution function
$P(\varphi >t)$ decay at any prescribed rate.



\begin{thm}\label{thm.no-exp-LD}
  Suppose $T:X\to X$ is a measure preserving map of a compact Riemannian
  manifold $(X,\mu)$. Let $p$ be a periodic point of period $\tau$. Suppose
  that $T$ is $C^1$ on an open neighborhood of the orbit of $p$. Suppose
  also there exists $\gamma>0$ such that $T^n (x)\in B_{n^{-\gamma} }(p)$
  i.o. for $\mu $ a.e. $x\in X$. Let $\varphi$ be a continuous observable
  on $X$ such that $\lim_{x\to p}\varphi (p)=\infty$, $\int \varphi d\mu=0$
  and $\varphi>-\rho$ for some $\rho>0$. Then the stationary stochastic
  process $\{\varphi\circ T^j\}$ does not satisfy exponential large
  deviations with a rate function.
\end{thm}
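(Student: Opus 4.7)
The plan is to assume for contradiction that $\{\varphi\circ T^j\}$ satisfies an exponential large deviation principle with rate function $I$ on a neighborhood $U$ of $0$, fix any $\alpha>0$ in $U$, and apply the upper Erd\H{o}s-R\'enyi law of Proposition~\ref{prop:erdos1}(a) to obtain, for $\mu$-a.e.\ $x$,
\[
\limsup_{n\to\infty}\ \max_{0 \le j \le n - \ell_n}\ \frac{1}{\ell_n}\, S_{\ell_n}(\varphi)(T^j x)\ \le\ \alpha, \qquad \ell_n=\Bigl[\tfrac{\log n}{I(\alpha)}\Bigr].
\]
The contradiction will come from producing, for a.e.\ $x$, infinitely many pairs $(n,j)$ with $0\le j\le n-\ell_n$ at which $S_{\ell_n}(\varphi)(T^j x)/\ell_n>\alpha$.

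The mechanism is orbit shadowing of the periodic orbit. By the $C^1$ hypothesis, fix a small tubular neighborhood of $\operatorname{orbit}(p)$ on which $\|DT\|\le L<\infty$. The mean value inequality yields $d(T^{m+i}x,T^i p)\le L^i\, d(T^m x,p)$ as long as the right side stays in the tubular neighborhood. For $\mu$-a.e.\ $x$, by hypothesis there are arbitrarily large $m$ with $d(T^m x,p)\le m^{-\gamma}$; for each such $m$ the forward orbit of $T^m x$ shadows that of $p$ for about $\gamma\log m/\log L$ iterates, and in particular $T^{m+k\tau}x\in B_{L^{k\tau}m^{-\gamma}}(p)$ for $k=0,1,\dots,K_m$ with $K_m\sim \gamma\log m/(\tau\log L)$.

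Now I would choose a large constant $M$ and, using $\varphi(x)\to\infty$ as $x\to p$, pick $\delta_M>0$ such that $\varphi\ge M$ on $B_{\delta_M}(p)$. For all sufficiently large $m$, at least $J_m=\gamma\log m/(\tau\log L)+O_M(1)$ of the times $m+k\tau$, $k=0,\dots,K_m$, land inside $B_{\delta_M}(p)$, and hence contribute at least $M$ each. Taking $j=m$ and $n=m+\ell_m+1$ so that $j\le n-\ell_n$, and using $\varphi\ge -\rho$ on the remaining terms, the sum $S_{\ell_n}(\varphi)(T^m x)$ contains $J'=\min(J_m,\lfloor\ell_n/\tau\rfloor)$ terms bounded below by $M$ and at most $\ell_n$ terms bounded below by $-\rho$, which yields
\[
\frac{S_{\ell_n}(\varphi)(T^m x)}{\ell_n}\ \ge\ \frac{M}{\tau}\,\min\!\Bigl(\frac{\gamma I(\alpha)}{\log L},\,1\Bigr)\ -\ \rho\ +\ o(1).
\]
Choosing $M$ large enough (depending only on $\alpha,\gamma,\rho,\tau,L$) makes the right-hand side strictly larger than $\alpha$, and since this holds along an unbounded subsequence of $m$ (equivalently $n$), it contradicts the upper Erd\H{o}s-R\'enyi bound.

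The main obstacle is that $\varphi$ may blow up arbitrarily slowly at $p$, so a single return to $B_{m^{-\gamma}}(p)$ provides no usable lower bound on $\varphi(T^m x)$ compared with $\ell_n = O(\log n)$. The argument therefore cannot rely on one very large value of $\varphi$; it must harvest the $\Theta(\log m)$ enforced near-$p$ returns produced by the shadowing step, each contributing at least the fixed-but-large value $M$. The remaining work is bookkeeping of constants, to be chosen in the order: $\alpha\in U$, then $M$, then $\delta_M$, then a tubular neighborhood of $\operatorname{orbit}(p)$ small enough for the shadowing estimate to be valid over $K_m$ iterates.
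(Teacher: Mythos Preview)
Your proposal is correct and follows essentially the same route as the paper: assume a rate function exists, invoke the upper Erd\H{o}s--R\'enyi law from Proposition~\ref{prop:erdos1}(a), and contradict it by using the $C^1$ bound on $DT$ to show that each close return $d(T^m x,p)\le m^{-\gamma}$ forces $\Theta(\log m)$ subsequent iterates to stay near the orbit of $p$, each contributing at least a fixed large value $M$ to the Birkhoff sum, while the remaining terms are bounded below by $-\rho$. The paper first reduces to period $\tau=1$ and works with half the available shadowing time ($\gamma\log n/(2\log K)$ iterates in $B_{n^{-\gamma/2}}(p)$); you keep general $\tau$ and track the returns at multiples of $\tau$, which is a harmless variant.
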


\begin{proof}
  Without loss of generality we take the period of $p$ to be one. If
  $\varphi$ satisfies a large deviation principle with rate function $I$
  defined on an open set $U$ then by
  Proposition~\ref{prop:erdos1} 
  (a) if $\alpha>0$ is in the interval where $I(\alpha)$ is defined and we
  let
$$ \ell_n=\ell_n(\alpha)=\left[\frac{\log n}{I(\alpha)}\right]\qquad n\in\mathbb N$$
 the upper Erd\H{o}s-R\'enyi law holds, that is, for $\mu$ a.e. $x\in X$ 
$$ \limsup_{n\to\infty} \max\{S_{\ell_n} (\varphi) \circ T^j (x)/\ell_n: 0\le j\le n-\ell_n\} \le \alpha.$$

Since $T$ is $C^1$, $|DT|_{\infty}<K$ for some $K>0$ on an open neighborhood of 
the orbit of $p$.

Fix $\alpha>0$ in $U$ and let
$M>\left[\frac{\alpha+\rho}{I(\alpha)}\right]\frac{2\log K}{\gamma}$.
Choose $N$ large enough that $\varphi (x)>M$ for all $x$ such that
$d(x,p)<\frac{1}{N^{\gamma/2}}$.
 
If $d(T^n (x),p)\le \frac{1}{n^{\gamma}}$ then
$d(T^{n+j} (x),p)\le \frac{1}{n^{\gamma/2}}$ for at least $j$ iterates,
$1\le j\le \frac{\gamma\log n}{2\log K}$ (this estimate comes from solving
$K^j\frac{1}{n^{\gamma}}=\frac{1}{n^{\gamma/2}}$). Moreover if $n>N$,
$j\le \frac{\gamma\log n}{2\log K}$ and
$T^{n+j} (x) \in [0,\frac{1}{n^{\gamma}}]$, then
$\varphi (T^{n+j} (x))\ge M$. By assumption
$T^n (x) \in B_{n^{-\gamma} (p)}$ i.o. for $\mu$ a.e. $x \in X$. If
$T^n (x) \in B_{n^{-\gamma}} (p)$ then
$S_{\ell_n} (\varphi) \circ T^j (x)>M(\gamma \frac{\log n}{2\log K})-\rho
\frac{\log n}{I(\alpha)}$ (as $\varphi \ge -\rho$). Since
$M> \left[\frac{\alpha+\rho}{I(\alpha)}\right]\frac{2\log K}{\gamma}$ this
implies that for $\mu$ a.e. $x$
\[
\limsup_{n\to\infty} \max\{S_{\ell_n} (\varphi) \circ T^j (x)/\ell_n: 0\le
j\le n-\ell_n\} >\alpha
\]
which is a contradiction to the upper Erd\H{o}s-R\'enyi law. Hence
exponential large deviations with a rate function cannot hold for this
observable.
\end{proof}

Moreover, for such observables the logarithmic moment generating function
is infinite, as shown by the next proposition.

\begin{prop}\label{thm.infinite-presure}
  Suppose $T:X\to X$ is a measure preserving map of a compact manifold
  $(X,\mu)$.
  Let $p$ be a periodic point such that
  on an open neighborhood of the orbit of $p$ the map $T$ is $C^1$, and
  there are positive constants $d$ and $c$ such that
  $\mu(\{x: d(x,p)< r\})\ge c r^d$ for small $r>0$. Let $\varphi$ be an
  observable on $X$ such that $\lim_{x\to p}\varphi (p)=\infty$,
  $\int \varphi d\mu=0$.

  Then for any $t > 0$
  \begin{equation}
    \label{eq.pressure-infinite}
    \lim_{n\to\infty} \frac{1}{n}\log\left(\int e^{t S_n(\phi)} d \mu\right) = \infty
  \end{equation}
\end{prop}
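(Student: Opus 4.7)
The plan is to produce a concrete family of points on which the ergodic sum $S_n(\varphi)$ is forced to be of order $nM$ for an arbitrarily prescribed $M$, and to show that the measure of this family decays only exponentially in $n$. Plugging these two facts into the integral $\int e^{tS_n(\varphi)}d\mu$ will immediately give the result.

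As in the proof of Theorem~\ref{thm.no-exp-LD} I would reduce to the case $\tau=1$, so that $p$ is a fixed point. By the $C^1$ assumption there exist a neighborhood $V$ of $p$ and a constant $K>0$ with $|DT|\le K$ on $V$, so that $T$ is $K$-Lipschitz on $V$ and fixes $p$. Given any $M>0$, use the assumption $\lim_{x\to p}\varphi(x)=\infty$ to choose $\delta>0$ with $B_\delta(p)\subset V$ and $\varphi(x)>M$ for every $x\in B_\delta(p)$. A one-line induction shows that if $x\in B_{\delta K^{-n}}(p)$, then $d(T^j(x),p)\le K^j d(x,p)<\delta K^{-(n-j)}\le \delta$ for every $0\le j\le n$, so $T^j(x)\in B_\delta(p)$ and therefore $\varphi(T^j(x))>M$ for $0\le j\le n-1$. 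Hence on the set $A_n:=B_{\delta K^{-n}}(p)$ one has $S_n(\varphi)>nM$.

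Now restrict the integral to $A_n$ and use the hypothesis $\mu(B_r(p))\ge c r^d$ with $r=\delta K^{-n}$:
\[
\int e^{t S_n(\varphi)}\,d\mu \;\ge\; \int_{A_n} e^{t S_n(\varphi)}\,d\mu \;\ge\; e^{tnM}\,\mu(A_n)\;\ge\; c\delta^d\, e^{tnM-nd\log K}.
\]
Taking $\frac{1}{n}\log$ and sending $n\to\infty$ gives
\[
\liminf_{n\to\infty}\frac{1}{n}\log\!\int e^{t S_n(\varphi)}\,d\mu \;\ge\; tM - d\log K.
\]
Since $t>0$ is fixed and $M>0$ was arbitrary, letting $M\to\infty$ shows that the $\liminf$ equals $+\infty$, which forces the limit to equal $+\infty$ as claimed.

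There is really no main obstacle: the argument is just the natural lower bound obtained by restricting to orbits that shadow the fixed point long enough to collect $n$ large values of $\varphi$. The only mild point to be careful about is the reduction from period $\tau$ to period $1$ (invoked exactly as in Theorem~\ref{thm.no-exp-LD}) and ensuring that the chosen $\delta$ is small enough that the full $n$-step forward orbit starting in $A_n$ remains inside the $C^1$ neighborhood $V$, which is automatic from the inductive Lipschitz bound above.
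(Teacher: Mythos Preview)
Your proof is correct and follows essentially the same approach as the paper: restrict to a shrinking ball of radius $\delta K^{-n}$ around the fixed point so that the first $n$ iterates all stay in $B_\delta(p)$ and contribute at least $M$ each to $S_n(\varphi)$, then use the measure lower bound $\mu(B_r(p))\ge c r^d$ to see the measure of this set decays only exponentially in $n$, yielding $\liminf\frac{1}{n}\log\int e^{tS_n(\varphi)}d\mu\ge tM-d\log K$ for every $M$. The paper's argument is identical up to notation (it writes $\lambda$ for your $K$ and $r$ for your $\delta$).
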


\begin{proof}

  Without loss of generality, can assume that $p$ is a fixed point.

  Pick $\lambda > 1$ such that $|T'| < \lambda$ in a neighborhood of $p$.

  Fix $t>0$; for $L>0$, let $M>L/t$ and $r>0$ such that
\[
  d(x,p)<r \implies \phi(x) > M.
\]
Since 
\[
  d(x,p)<\frac r {\lambda^n} \implies \phi(T^k(x)) > M \text{ for
    $k \le n$} \implies S_n(x) \ge n M
\]
we conclude that
\[
  \int e^{t S_n(\phi)} d \mu \ge \mu(\{x: d(x,p)<\frac r {\lambda^n}\})
  e^{t n M} \ge \frac {c r^d}{\lambda^{d n}} e^{t n M}
\]
and therefore
\begin{equation*} 
  \liminf_{n\to\infty} \frac{1}{n}\log\left(\int e^{t S_n(\phi)} d
    \mu\right) \ge t M - d \log \lambda \ge L - d \log \lambda
\end{equation*}
Since $L$ was arbitrary, we obtain~\eqref{eq.pressure-infinite}.
\end{proof}


\begin{examp}\label{Hamiltonian}

(i)  Suppose $T:X\to X$ is a $C^1$ map of a compact Riemannian metric space $X$ which preserves
 a measure $\mu$ equivalent to volume and $T$ is exponentially mixing for H\"older functions in the sense
 that for all $\phi$, $\psi$ which are $\alpha$-H\"older on $X$ there exist constants $C$, $0<\theta<1$ such that for 
 all $n\ge 0$
 \[
| \int \phi \psi\circ T^n d\mu -\int \phi d\mu \int \psi d\mu | \le C\theta^n \|\phi\|_{\alpha} \|\psi\|_{\alpha}
\]
Theorem 5.1 of \cite{HNPV} shows that if $B_i$ is a nested sequence of balls with center a periodic point $p\in X$
and there exists a constant $C_3>0$ such that $\mu (B_i)\ge C_3/i$ for all $i>0$ then
$\mu$ a.e. $x\in X$ satisfies $T^n x \in B_n$ infinitely often. Thus in this setting if
$\varphi$ is a continuous observable on $X$
  such that $\lim_{x\to p}\varphi (p)=\infty$, $\int \varphi d\mu=0$ and
  $\varphi>-\rho$ for some $\rho>0$ then   $\{\varphi\circ T^j\}$ does not satisfy exponential large deviations with
  a rate function.

   
  (ii) In particular, the tent map $T$ of the unit interval $\Omega=[0,1]$:
  \[
  T(x) = \left\{ \begin{array}{ll}
         2x & \mbox{if $0\le x \leq \frac{1}{2}$};\\
        2x-1 & \mbox{if $\frac{1}{2}\le x \le 1$}.\end{array} \right.
  \] 
  preserves Lebesgue measure and has exponential decay of correlations for
  H\"older observables on the system. The function
  $\phi (x)=\log (1-\log x) -\int \log (1-\log x) dx$ is continuous except
  at the fixed point $0$, and satisfies the assumptions of
  Theorem~\ref{thm.no-exp-LD} so does not have exponential large deviations
  with a rate function.
    \end{examp}


 
   


\subsection{Exponential decay of autocorrelations and in $L^p$, but
  strictly stretched exponential large deviations.}

In this section we consider piecewise expanding $C^2$ maps of the interval
or, more generally, Rychlik maps. For the definition of a Rychlik map
(piecewise expanding with countable many branches and additional
properties) see~\cite{Rychlik83} or~\cite[Section 5.4]{boyarsky-gora}. In
Remark~\ref{rmk.maps} we discuss further applications. We consider
observables of the form $\varphi(x)=(-\log |x-p|)^{\alpha}$ where $p$ is
periodic. We will show that $\varphi(x)$ has strictly stretched exponential
large deviations, despite having exponential decay of autocorrelations and
exponential decay in $L^p$.



Recall that for a $\mu$-preserving map $T$ which is non-singular, the
transfer operator $P:L^1(\mu)\to L^1(\mu)$ is defined uniquely by the
condition $\int (Pf) g\,d \mu =\int f (g\circ T)\,d \mu$ for all
$f \in L^1(\mu)$, $g \in L^{\infty}(\mu)$.

\begin{thm}\label{thm.slow-decay}
  Let $(T,\Omega,\mu)$ be a topologically mixing Rychlik map of the unit
  interval $\Omega=[0,1]$ and $\varphi(x):=(-\log |x-p|)^{\alpha}$ where
  $p$ is periodic for $T$ and $\alpha>0$.
%
  Assume that $T$ is $C^1$ on a neighborhood of the orbit of $p$. Then:

  (a) There are $c, C, r > 0$ such that for all $\epsilon>0$ close to zero
  and $\delta > 0$
  \[
    c\:\exp(-r n^{\frac {1}{1+\alpha}}) \le \mu \left(S_n(\phi) -n \int
      \varphi d\mu\ge n \epsilon\right) \le C \exp(-n^{\frac
      {1}{1+\alpha}-\delta}) \qquad \text{as $n\to \infty$}
  \]
  Therefore, 
  for any $\epsilon>0$ close to zero,
  \[
    \lim_{n\to \infty} \frac{\log \left[ - \log \mu \left(S_n(\phi) -n \int
          \varphi d\mu\ge n \epsilon\right)\right]}{\log n}
    =\frac{1}{1+\alpha}
  \]

  (b) $\varphi$ has exponential decay of autocorrelations: there exists
  $0<\theta<1$ and $C>0$ such that
  \[
    \left| \int (\varphi \circ T^n -\bar\phi) (\varphi -\bar\phi)\,d\mu
    \right|\le C e^{-\theta n} \qquad \text{for all $n$}.
  \]
  See Proposition~\ref{thm.autocorrelations} for a more general setting.

  (c) $\pnorm{P^n(\varphi-\bar\phi)}\to 0$ exponentially fast for each
  $p \in[1,\infty)$; see (a) of Proposition~\ref{thm.exp-decay-Lp} for
  details and more results.
\end{thm}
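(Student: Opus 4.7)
The common thread is a level-$M$ truncation $\varphi^M := \min(\varphi,M)$, combined with (i) the $BV$ spectral gap of the transfer operator $P$ for topologically mixing Rychlik maps, (ii) $L^p$-contractivity of $P$, and (iii) the tail bound $\mu(\varphi>t)\le C e^{-t^{1/\alpha}}$ (from $\varphi=(-\log|x-p|)^\alpha$ and boundedness of the Rychlik invariant density), which yields $\|\varphi-\varphi^M\|_{L^p} \lesssim e^{-M^{1/\alpha}/p}$ up to polynomial factors in $M$, for every $p\ge 1$.

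For the lower bound in (a) I would exhibit orbits that track the periodic point $p$. Set $k := \lceil r_0\, n^{1/(\alpha+1)}\rceil$ and $A_n := \{x : |x-p|<e^{-k}\}$. Since $T$ is $C^1$ on a neighborhood of the orbit of $p$, bounded distortion gives $|T^j x - p|\le K^j e^{-k}$ with $K = \sup|T'|$ locally, so for $j\le k/(2\log K)$ one has $\varphi(T^j x)\ge (k/2)^\alpha$. Hence $S_n(\varphi)(x)\ge c_0 k^{\alpha+1}\ge n(\bar\varphi+\epsilon)$ once $r_0$ is chosen large. The invariant density is bounded below near $p$, so $\mu(A_n)\gtrsim e^{-k} = e^{-r_0 n^{1/(\alpha+1)}}$, yielding the lower bound (with the obvious adjustment for period $\tau>1$, replacing $T$ by $T^\tau$ in the tracking argument).

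For the upper bound in (a), set $M_n := c_1 n^{\alpha/(\alpha+1)}$ and split
\[
  \mu\bigl(S_n(\varphi)-n\bar\varphi \ge n\epsilon\bigr) \le \mu\bigl(S_n(\varphi^{M_n})-n\overline{\varphi^{M_n}}\ge n\epsilon/2\bigr) + \mu\bigl(S_n(\varphi-\varphi^{M_n})\ge n\epsilon/2\bigr),
\]
using that $|\bar\varphi-\overline{\varphi^{M_n}}|=o(\epsilon)$. The tail summand is handled by Markov: $\le 2\|\varphi-\varphi^{M_n}\|_{L^1}/\epsilon \le C\exp(-n^{1/(\alpha+1)})$ up to polynomial prefactors. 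For the bounded summand, $\|\varphi^{M_n}\|_{BV}=O(M_n)$, so Nagaev--Guivarc'h perturbation theory for the twisted operator $P_z f := P(e^{zf}f)$ applies on $BV$ for $|z|\le c_0/M_n$; a Chernoff estimate optimized at $z^\ast \asymp 1/M_n$ yields $\exp(-c_2 n\epsilon/M_n) = \exp(-c_2 n^{1/(\alpha+1)})$. This is the crux of the argument: the admissible range of the perturbation parameter $z$ shrinks like $1/M_n$ as $M_n\to\infty$, which is precisely why the upper bound carries the arbitrarily small slack $\delta>0$ in the exponent rather than matching $n^{1/(\alpha+1)}$ sharply; the polynomial-in-$n$ prefactors from both summands get absorbed into $\delta$.

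Parts (b) and (c) follow from the same truncation with a different choice of $M$. For (b), write $\mathrm{Cov}(\varphi\circ T^n,\varphi) = \mathrm{Cov}(\varphi^M\circ T^n,\varphi^M) + R_n(M)$ with $|R_n(M)|\le 2\|\varphi\|_{L^2}\|\varphi-\varphi^M\|_{L^2}$ by Cauchy--Schwarz and $T$-invariance of $\mu$; the principal term is $O(\theta^n M^2)$ by the $BV$ spectral gap, and $M\asymp n^\alpha$ makes both pieces exponentially small. For (c), decompose $P^n(\varphi-\bar\varphi) = P^n(\varphi^M - \overline{\varphi^M}) + P^n\bigl((\varphi-\varphi^M) - (\bar\varphi - \overline{\varphi^M})\bigr)$; the first has $BV$-norm (hence $L^p$-norm) $\le C\theta^n M$ from the spectral gap, while the second is bounded in $L^p$ by $2\|\varphi-\varphi^M\|_{L^p}$ from $L^p$-contractivity of $P$, and again $M\asymp n^\alpha$ balances both contributions.
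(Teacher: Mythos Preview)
Your proposal is largely correct and follows the same truncation strategy as the paper. The lower bound in (a) via orbit-tracking near $p$ matches the paper's argument essentially verbatim, and your treatments of (b) and (c) coincide with Propositions~\ref{thm.autocorrelations} and~\ref{thm.exp-decay-Lp} (the paper takes $M_n=np\theta$ in (c) rather than $M_n\asymp n^\alpha$, but either choice gives exponential decay).

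The one place where you diverge from the paper, and where your sketch has a genuine soft spot, is the upper bound in (a). The paper does \emph{not} run Nagaev--Guivarc'h directly; it invokes Schindler's concentration inequality (Proposition~\ref{thm.schindler}) for the truncated sum, which delivers the rate $\exp(-\xi_n\,n\,\mathbb E(g_n)/f_n)\approx\exp(-n^{1-\beta\alpha-\delta})$ as a black box. Your claim that Chernoff at $z^\ast\asymp 1/M_n$ yields $\exp(-c_2 n\epsilon/M_n)$ is the right target, but it does not follow from the bare perturbation argument you outline. With only $\|\varphi^{M_n}\|_{BV}=O(M_n)$, the spectral perturbation gives $\|P_z-P\|_{BV}=O(|z|M_n)$ and hence persistence of the gap for $|z|\le c_0/M_n$, but Cauchy's estimate on that disk controls $\Lambda''$ only by $O(M_n^2)$; the resulting quadratic bound $\Lambda(t)-t\bar\psi=O(M_n^2 t^2)$ yields at best $\exp(-c n\epsilon^2/M_n^2)$, which after rebalancing gives the weaker stretched exponent $n^{1/(2\alpha+1)}$, not $n^{1/(1+\alpha)}$. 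To recover the linear rate $n/M_n$ you need the additional input that the asymptotic variance $\sigma^2_{\varphi^{M_n}}$ stays bounded (because $\varphi\in L^2$) together with a Bernstein-type refinement controlling $\Lambda(t)-t\bar\psi$ by $C(\sigma^2 t^2 + M_n |t|^3)$ on the full interval, not just at $t=0$. This is doable (e.g.\ Lezaud-type bounds for spectral-gap operators) and is essentially what Schindler's lemma packages, but it is a nontrivial lemma, not an immediate consequence of ``Nagaev--Guivarc'h plus Chernoff.'' You should either prove that refinement or cite an appropriate concentration inequality at this step.
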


\begin{rmk}\label{rmk.maps}
  By equation~\eqref{eq.lower-bound}, for $p$ a fixed point it suffices in
  (a) to take $r > (\int \phi d\mu + \epsilon)^{1/\alpha}+\log |T'(p)|$.

  More generally than Rychlik maps, one can consider mixing AFN maps
  introduced in~\cite{Zwei98}. For more about these, see~\cite[Section
  1.3]{KessSch_dynsyst_arxiv}. These maps satisfy Property $\mathfrak D$
  introduced by Schindler~\cite{Schindler} and described also
  in~\cite{KessSch_dynsyst_arxiv}, see the discussion preceding
  Proposition~\ref{thm.schindler} in the Appendix.

  The mixing assumption is only used for the upper bound in (a). For the
  lower bound in (a) it suffices that $\mu$ be an a.c.i.p. whose density is
  bounded below in a neighborhood of the orbit of $p$.
\end{rmk}

\begin{rmk}
  For example consider $\varphi(x)=-\log x$, an observable defined on the
  tent map $(T,[0,1],\text{Lebesgue})$. Clearly $\varphi$ is integrable,
  $ \int\varphi dx=1$, $\varphi$ has moments of all orders and
  $\mathbb{E}[e^{t \varphi}]= \int e^{t x} e^{-x} dx$ exists for $|t|<1$.

  This satisfies the assumptions of Theorem~\ref{thm.no-exp-LD}, see
  Example~\ref{Hamiltonian} 
  of the previous section, and hence does not satisfy exponential large
  deviations with a rate function.

  However, if $X_i$ is a sequence of i.i.d. random variables with the same
  distribution function as $\varphi$ and $S_n=\sum_{j=1}^n X_j$, then
  (exponential) large deviations with a rate function holds: for
  $0<\epsilon<1$
  \[
    \lim_{n\to \infty} \frac{1}{n} \log \mathbb{P}\!\left(
      \frac{S_n}{n}>1+\epsilon\right)=-\epsilon +\log (1+\epsilon)
  \]
  This follows from~\cite[Theorem 2.6.3]
  {Durrett}; e.g., note that $\phi$ has an exponential distribution as
  $\mu (\varphi >t)=e^{-t}$ and 
  see~\cite[Example 2.6.2]{Durrett}.
\end{rmk}

\begin{rmk}
  %
  By~\cite[Proposition 4.1, part (2)]{AFLV} applied to the tent or doubling
  map on the unit interval (so their $\theta$ is 1), there exist constants
  $C_1$, $C_2$ such that for $\phi(x):=-\log|x-z|$, $\{\varphi\circ T^j\}$
  has stretched exponential deviations at rate at least as fast as
  $C_1 e^{-C_2 n^{\frac{1}{9}-\delta}}$ for any $\delta>0$.

  However, for $z$ a fixed point of the map, Theorem~\ref{thm.slow-decay}
  above with $\alpha=1$ gives an upper bound
  $C \exp(- n^{\frac{1}{2}-\delta})$ and a lower bound
  $c \exp(- [(\int \phi d\mu + \epsilon)+\log 2+\delta] n^{\frac{1}{2}})$,
  $\delta>0$.
\end{rmk}

\begin{rmk}
  

  In the setting of non-uniformly expanding maps Ara\'ujo~\cite[Theorem
  3.1]{Araujo} gives an asymptotic condition on observables of form
  $\varphi (x)=-\log |x-p|$, called exponential slow recurrence to $p$,
  which implies exponential large deviations for $\varphi$. He also proves
  that singular points for certain Lorenz-like maps with unbounded
  derivatives satisfy this condition, which is in general difficult to
  check.

  
\end{rmk}

\begin{proof}[Proof of Theorem~\ref{thm.slow-decay}] 

  We will only prove (a) here, for parts (b) and (c) see
  Proposition~\ref{thm.autocorrelations} and
  Proposition~\ref{thm.exp-decay-Lp}.

  We obtain the lower bound from an easy direct computation. For the upper
  bound we use recent results of Tanja Schindler~{\cite[Lemma
    6.15]{Schindler}}, see Proposition~\ref{thm.schindler} in the Appendix.
  Related estimates are given in~\cite{KessSch_dynsyst_arxiv}.

  We mention here that with the argument of Schindler it does not seem
  possible to obtain exponential decay of correlations by assuming
  observables have slower growth rate than $-\log^+ (x)$.

  Beyond Property $\mathfrak{D}$, we use the fact that a mixing Rychlik map
  has an invariant density in $BV[(0,1])$, bounded away from zero.


  Without loss of generality we will take $p$ to be a fixed point.

  Denote $S_n(\phi)$ by $S_n$ for short.
  Denote the absolutely continuous invariant probability of $T$ by
  $d\mu=h\,d x$, with $0< m \le h \le M$. 

  \textbf{Lower bound:}
  %
  We will discuss only the case $T'(p)>0$ and $p\not = 1$, the others being
  similar.
  Pick $\lambda >1$ such that $T'(x)\le \lambda$ in a neighborhood of $p$.

  Consider $x\in [p,p+e^{-r n^\omega}]$ with $r, \omega>0$ to be determined
  later. Then $T^j(x)\in [p,p+\lambda^j e^{-r n^\omega}]$ so
  $\phi(T^j(x))\ge [r n^\omega - j \log(\lambda)]^{1/\alpha}$ as long as
  all the iterates stay close to $p$, and therefore, for $K\le n$,
  \[
    S_n(x) \ge \sum_{j=1}^K \phi(T^j(x)) \ge K[r n^\omega - K
    \log(\lambda)]^{1/\alpha}=\widetilde{K}
  \]
  Thus
  \[
    \mu(S_n - n\bar{\phi} \ge n \epsilon) \ge \mu([p, p+e^{-r n^\omega}])
    \ge m e^{-r n^\omega}
  \]
  as long as
  \[
    \widetilde{K} \ge n(\bar{\phi}+\epsilon) \iff r n^\omega \ge
    \left(\frac{n(\bar\phi+\epsilon)}{K}\right)^{1/\alpha} + K
    \log{\lambda} \qquad \text{ and } \qquad \lambda^K e^{-rn^\omega} \ll 1
  \]
  The choice
  \begin{equation}\label{eq.lower-bound}
    \omega=\frac{1}{1+\alpha},\qquad K=n^{\omega}, \qquad r > (\bar\phi +
    \epsilon)^{1/\alpha}+\log{\lambda}
  \end{equation}
  satisfies both conditions as $n \to \infty$.
  
  \textbf{Upper bound:} We define a sequence of functions (truncated
  $\varphi$) by
  \[
    g_n(x) = \left\{ \begin{array}{ll}
                       0 & \mbox{if $ d(x,p) \leq e^{-n^{\beta}}$};\\
                       \varphi (x) & 
              \mbox{if $d(x,p) > e^{-n^{\beta}}$}.\end{array}
     \right.
  \]

  Then $\|g_n\|_{BV} \le 3 |g_n|_{\infty} =3 n^{\beta\alpha}$. We calculate
  $\mathbb{E}(g_n)=\mathbb{E}(\phi)+O(e^{-n\beta} (n\beta)^\alpha)$. Hence,
  by Schindler's estimate~\cite{Schindler} (see
  Proposition~\ref{thm.schindler} in the Appendix), if
  $G_n:=\sum_{j=1}^n g_n \circ T^j$ and $E_n:=\int G_n d\mu$ then for any
  sequence $\zeta_n $ tending to zero (we take
  $\zeta_n=(\log\log n)^{-1}$), for $n$ sufficiently large

 \[
   \mu(|G_n -E_n| >\epsilon E_n) \le 2
   \exp\!\left(-n\frac{\mathbb{E}(g_n)}{n^{\beta\alpha}}\zeta_n\right)
 \]
 \[
   = 2\exp \!\left(-n^{1-\beta\alpha}\frac{\bar\phi+ O(e^{-n\beta}
       (n\beta)^\alpha)}{\log\log n}\right) \le 2
   e^{-n^{1-\beta\alpha-\delta}}
 \]
 for $\delta > 0$ as $n\to\infty$. Taking into account that
 $\mu(S_n \not = G_n) \le O(n e^{-n^\beta})$, we choose $\beta$ to maximize
 $\min(1-\beta\alpha, \beta)$; this gives $\beta=\frac{1}{1+\alpha}$ and
 thus the claimed upper bound.
\end{proof}

We establish next exponential decay of auto correlations.

\begin{rmk}
  Since for the above maps $T$ the transfer operator $P$ has a spectral gap
  on $BV$, it follows easily that for any $z\in [0,1]$ the function
  $\phi(x):=(-\log d(x,z))^\alpha$, $\alpha > 0$, has exponential decay
  against $BV$-observables.
\end{rmk}

\begin{prop}\label{thm.autocorrelations}
  Assume $T:[0,1]\to[0,1]$ has an a.c.i.p. $\mu$ with density bounded
  above, and its transfer operator $P$ has a spectral gap on $BV$.
  Then $\phi(x):=(-\log d(x,z))^\alpha$, $\alpha > 0$, has exponential
  decay of autocorrelations.

  Namely, let $r$ be the spectral radius of $P$ on
  $BV\cap \{f \mid \int f d\mu = 0 \}$. Then, for every
  $\beta <\min \{-\log{ r }, \frac{1}{2}\}$ there exists a constant
  $C_{\beta} >0$ such that
\[
  \left| \int (\varphi \circ T^n -\bar\phi) (\varphi -\bar\phi)\,d\mu
  \right|\le C_\beta e^{-\beta n} \qquad \text{for all $n$}.
\]
\end{prop}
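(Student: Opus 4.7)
The plan is to reduce the unbounded observable to a bounded one by truncating $\varphi$ near the singularity at $z$, apply the spectral gap on $BV$ to the truncated piece, and absorb the residual via Cauchy-Schwarz using the explicit exponential tail $\mu(\varphi>t)\le Ce^{-t^{1/\alpha}}$ that follows from the density being bounded above.

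First, with $\psi:=\varphi-\bar\varphi$, the adjoint identity gives $\int (\psi\circ T^n)\psi\,d\mu = \int (P^n\psi)\cdot\psi\,d\mu$, but the spectral gap cannot be applied directly because $\psi\notin BV$. For a parameter $N=N(n)$ to be chosen later, I set $\varphi_N:=\min(\varphi,N^\alpha)$, $\psi_N:=\varphi_N-\int\varphi_N\,d\mu$, and $s_N:=\psi-\psi_N$, both mean-zero under $\mu$. Then $\psi_N\in BV$ with $\|\psi_N\|_{BV}\le CN^\alpha$, and the substitution $u=-\log d(x,z)$ together with the density bound gives
\[
\|s_N\|_{L^2(\mu)}^2\ \le\ \int_{d(x,z)<e^{-N}}\varphi^2\,d\mu\ \le\ C\int_N^\infty u^{2\alpha}e^{-u}\,du\ \le\ C' N^{2\alpha}e^{-N}.
\]

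Next, expanding the bilinear form in $\psi_N+s_N$ produces a main term $\int(P^n\psi_N)\cdot\psi_N\,d\mu$ plus three cross terms involving $s_N$. For the main term the spectral gap yields $\|P^n\psi_N\|_{BV}\le Cr^n\|\psi_N\|_{BV}\le C'r^nN^\alpha$, so by $\|\cdot\|_\infty\le\|\cdot\|_{BV}$ and H\"older the main term is at most $Cr^nN^\alpha\cdot\|\psi_N\|_{L^1(\mu)}=O(r^nN^\alpha)$. Each of the three cross terms is bounded, via Cauchy-Schwarz and the $T$-invariance of $\mu$, by $C\|s_N\|_{L^2(\mu)}\le CN^\alpha e^{-N/2}$ (the $s_N$--$s_N$ term being even smaller). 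Summing yields
\[
\left|\int(\psi\circ T^n)\psi\,d\mu\right|\ \le\ Cr^n N^\alpha+CN^\alpha e^{-N/2}.
\]

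Finally, choosing $N=2\beta n$ and absorbing polynomial factors into $C_\beta$ produces the advertised bound for every $\beta<\min(-\log r,1/2)$. The principal obstacle is that the pointwise singularity of $\varphi$ forces $\|\psi_N\|_{BV}\sim N^\alpha$ for any truncation at level $N^\alpha$, so $N$ cannot be sent to infinity without cost to the spectral-gap estimate. The rate $\min(-\log r,1/2)$ is precisely what emerges from balancing the $r^n$ decay delivered by the spectral gap against the $e^{-N/2}$ decay of the Cauchy-Schwarz truncation error when $N$ is taken proportional to $n$.
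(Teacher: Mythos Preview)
Your proof is correct and follows essentially the same approach as the paper: truncate $\varphi$ at a level growing with $n$, apply the $BV$ spectral gap to the truncated part, and control the remainder in $L^2$ via the explicit tail estimate coming from the bounded density. The paper's decomposition is slightly more economical---it splits only one factor of the product $\phi\circ T^n\cdot(\phi-\bar\phi)$ rather than both, yielding two terms rather than four---but the estimates and the final balancing (their $M_n=n^\alpha$ is your $N^\alpha$ with $N\sim n$) are the same. One small technical point: the spectral radius being $r$ gives $\|P^n\psi_N\|_{BV}\le C_\rho\,\rho^n\|\psi_N\|_{BV}$ for each $\rho>r$, not $r^n$ itself; you implicitly use this slack anyway when you absorb polynomial factors at the end, so the argument goes through unchanged once $\rho\in(r,e^{-\beta})$ is fixed.
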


\begin{proof}
  Define a sequence of truncations
\[
  h_n(x) := \min\{M_n, \phi(x)\},
  \qquad \delta_n:=\phi-h_n, \qquad \bar{\phi}:=\int \phi d\mu, \qquad
  \bar{h_n}:=\int h_n d\mu
\]
with $M_n$ to be determined later. Note that $\phi\in L^2(\mu)$ since $\mu$
has a density bounded above.

Obviously $\|h_n\|_{BV}\le 3 |h_n|_{\infty} = 3 M_n$. Using the spectral
gap of the transfer operator $P$ in $BV$, for every $r < \rho < 1$ there is
a $C_P>0$ such that
$|P^n(h)|_\infty \le \|P^n(h)\|_{BV} \le C_P \rho^n \|h\|_{BV}$ provided
$\int h d\mu=0$.
%

Then, using the spectral gap of $P$ on BV, H\"older's inequality and that
$|\bar{h_n}-\bar \phi|=|\int \delta_n d\mu| \le \|\delta_n\|_{L^2}$, and
denoting by $C$ all constants that do not change with $M_n$ and $n$:
\begin{align*}
  \left| \int \phi\circ T^n \cdot (\phi-\bar \phi) d\mu \right| 
  &= \left|
    \int \phi\circ T^n \cdot [(h_n-\bar {h_n}) + (\delta_n + \bar
    {h_n}-\bar \phi)d\mu \right| 
  \\
  & \le \left| \int \phi \cdot P^n(h_n-\bar {h_n}) d\mu\right|  +
    \|\phi\|_{L^2} (\|\delta_n\|_{L^2} + |\bar
    {h_n}-\bar \phi|) \\
  & \le C  \rho^n \|\phi\|_{L^1} \|h_n-\bar {h_n}\|_{BV} + C \|\phi\|_{L^2} \|\delta_n\|_{L^2}
\end{align*}
Since 
\[
  \delta_n(x)\not = 0 \iff d(x,z) \le M_n^{1/\alpha}
\]
and $\mu$ has a density bounded above, for any $\delta >0$ we obtain
\[
  \|\delta_n\|_{L^2}^2 \le C\int_{|x-z|\le M_n^{1/\alpha}}
  |\log(|x-p|)|^{2\alpha} d x \le C_\delta e^{-(1-\delta) M_n^{1/\alpha}}
\]
and therefore
\[
  \left| \int \phi\circ T^n \cdot (\phi-\bar \phi) d\mu \right|  \le C
  \|\phi\|_{L^1} M_n  \rho^n + C C_\delta \|\phi\|_{L^2}  e^{-\frac{1}{2}(1-\delta) M_n^{1/\alpha}}
\]
Choose now $M_n=n^\alpha$.
%
%
Hence for every $\beta <\min \{-\log{\rho}, \frac{1}{2}\}$ there exists a
constant $C_{\beta} >0$ such that for all $n$
\[
  \left| \int (\varphi \circ T^n -\bar\phi) (\varphi -\bar\phi)\,d\mu
  \right|\le C_\beta e^{-\beta n}.
\]
\end{proof}

\subsection{Stretched exponential large deviations from exponential decay
  in Lipschitz}



In this section we slightly improve estimates of  \cite{AFLV}[Proposition 4.1 (b)] to obtain 
a better stretched exponent decay rate.  

\begin{thm}\label{th.LD-from-Lip}
  Let $(T, X, \mu)$ with $X=[0,1]$ be a dynamical system where $\mu$ is a
  $T$-invariant a.c.i.p having density bounded above,
  $d\;\mu/d\;{\operatorname{Lebesgue}}\le C_\mu$.

  Consider the observation $\phi:[0,1]\to \R$, $\phi(x):=|\log d(x,z)|$ for
  some $z\in X$.

  Assume $P$, the transfer operator w.r.t. $\mu$, has exponential decay in
  the space of Lipschitz functions: there are constants $\theta, C_P>0$
  such that
  \[
    \lipnorm{P^n f} \le C_P e^{-\theta n} \lipnorm{f} \text{ provided } \int f d
    \mu =0.
  \]
  
  Fix $\alpha<1/5$. Then for $\epsilon > 0$ close to zero there are
  $C=C_{\epsilon, \alpha}>0$ and $r=r_{\epsilon, \alpha}>0$ such that
  \[
    \mu\left(\left|S_n(\phi -\int \phi d \mu)\right|> n \epsilon\right) \le
    C \exp(- r n^{\alpha})
  \]
\end{thm}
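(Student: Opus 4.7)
The plan is to split $\phi$ at a truncation height $M_n$, handle the bounded Lipschitz piece via a moment/Bernstein inequality using the spectral gap of $P$ on Lipschitz, and handle the unbounded remainder by a direct occupation estimate. Set $h_n(x) := \min\{\phi(x), M_n\}$ and $\psi_n := \phi - h_n \ge 0$, so that $\phi = h_n + \psi_n$, $\|h_n\|_\infty \le M_n$, and $\lipnorm{h_n} \le e^{M_n}$ (the slope of $-\log d(x,z)$ at the truncation radius $e^{-M_n}$ is exactly $e^{M_n}$). The remainder $\psi_n$ is supported on the ball $B_n := \{x : d(x,z) \le e^{-M_n}\}$; since the density of $\mu$ is bounded by $C_\mu$, we have $\mu(B_n) \le 2 C_\mu e^{-M_n}$ and $\bar\psi_n = O(e^{-M_n})$. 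It suffices to handle $\mu(S_n(\phi - \bar\phi) > n\epsilon)$, the other side being symmetric (and trivial for the $\psi_n$ piece since $\psi_n \ge 0$).

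For the unbounded part, the event $\{S_n \psi_n > 0\}$ requires the orbit $(T^j x)_{j<n}$ to visit $B_n$ at least once, so by stationarity and a union bound, $\mu(S_n\psi_n > 0) \le n \mu(B_n) \le 2 C_\mu n e^{-M_n}$. Since $n\bar\psi_n = O(n e^{-M_n})$ is much smaller than $n\epsilon$ for large $n$, this yields $\mu(|S_n(\psi_n - \bar\psi_n)| > n\epsilon/2) \le C n e^{-M_n}$. For the bounded Lipschitz part $h_n - \bar h_n$, I would apply a Bernstein-type moment inequality derived from the spectral gap of $P$ on Lipschitz, in the spirit of \cite[Proposition 4.1(b)]{AFLV}: estimating $\int |S_n(h_n - \bar h_n)|^{2k}\,d\mu$ for large $k$ via iterated transfer-operator bounds (or a Gordin--Burkholder martingale decomposition), then optimizing in $k$ via Markov's inequality, one obtains a tail bound of the form $\mu(|S_n(h_n - \bar h_n)| > n\epsilon/2) \le C_\epsilon \exp(-c_\epsilon\, n / M_n^{p})$ for some exponent $p$ determined by the moment method.

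Finally, choosing $M_n = r n^\beta$, the combined tail is bounded by $C \exp(-r n^\beta) + C_\epsilon \exp(-c_\epsilon n^{1-p\beta})$; requiring both terms to be $\le C' \exp(-r' n^\alpha)$ forces $\alpha \le \beta$ and $\alpha \le 1-p\beta$, and these are compatible precisely when $\alpha \le 1/(p+1)$. The claimed range $\alpha < 1/5$ thus corresponds to $p = 4$.

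The main obstacle is obtaining this sharp value $p = 4$ in the Bernstein step. Because $\lipnorm{h_n} = e^{M_n}$ is exponentially large in $M_n$, the naive correlation bound $\bigl|\int h_n \cdot h_n \circ T^k\, d\mu - \bar h_n^2\bigr| \le C e^{-\theta k} \lipnorm{h_n}\, \|h_n\|_{L^1}$ coming directly from the spectral gap is useless, and any higher-order covariance estimate inherits the same blow-up. One must therefore exploit $\|h_n\|_\infty \le M_n$ carefully, absorbing the exponential Lipschitz constants into the rate $e^{-\theta k}$ (truncating the correlation series at $k \asymp M_n / \theta$) so that the effective moment bounds are polynomial in $M_n$ rather than exponential. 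Arranging this interplay between the Lipschitz spectral gap and the $L^\infty$ bound so as to push the exponent down to $p = 4$ is precisely the refinement over \cite{AFLV}, who effectively work with $p = 8$ and thereby obtain only the rate $1/9$.
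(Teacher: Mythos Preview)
Your outline is essentially the paper's proof. The paper carries out exactly your parenthetical option: Gordin's decomposition $\nomean{h\upn}=g\upn+w\upn\circ T-w\upn$ with $w\upn=\sum_{k\ge1}P^k\nomean{h\upn}$ and $Pg\upn=0$, then Azuma--Hoeffding on the reverse martingale $S_n(g\upn)$, rather than a moment expansion. The key step you identify --- truncating the series for $w\upn$ at $C_n\asymp M_n/\theta$ so that the first $C_n$ terms contribute $C_n M_n$ via $|P^k f|_\infty\le|f|_\infty$ and the tail contributes $e^{-\theta C_n}e^{M_n}=O(1)$ via the Lipschitz decay --- is exactly what the paper does, yielding $|w\upn|_\infty\lesssim M_n^2$ and hence $|g\upn|_\infty\lesssim M_n^2$. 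Azuma then gives $\exp(-c\,n/|g\upn|_\infty^2)=\exp(-c\,n/M_n^4)$, which is your $p=4$; balancing against $n e^{-M_n}$ at $M_n=n^{1/5}$ gives the result. Your diagnosis of why this improves on \cite{AFLV} is correct.
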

\begin{rmk}\ 
  \begin{enumerate}
  \item The doubling map satisfies the hypothesis of
    Theorem~\ref{th.LD-from-Lip}. Therefore, by
    Theorem~\ref{thm.slow-decay} cannot have decay rate faster than
    $\exp(-n^{1/2})$.

  \item In this setting a similar stretched exponential large deviations
    was also obtained in \cite{AFLV}[Proposition 4.1 (b)], but with rate
     $\exp(-n^{1/9})$.
  \end{enumerate}
\end{rmk}

\begin{proof}
  For simplicity, we take $z$ to be zero, so $\phi=|\log x|$.

  We denote by 
  \[
    \nomean{f} := f-\int f d\mu
  \]
  and when this correction $\int f d\mu$ is $O(1)$ we might ignore it in
  the estimates.

  We introduce three parameters, $M_n$, $C_n$ and $\alpha$, whose optimal
  value will be determined at the end.

  Denote $h\upn:=\min\{\phi, M_n\}$.
  Define $w\upn:=\sum_{k\ge 1} P^k \nomean{h\upn}$ and write
  $h\upn :=g\upn+w\upn\circ T- w\upn$. Since $P g\upn=0$, its Birkhoff sums
  form a martingale.

  Recall the Azuma-Hoeffding inequality: if
  $S_n:=\sum_{k=1}^n X_n$ is a martingale
  whose increments $X_n$ satisfy $|X_n|\le M_n$ then
  \[
    P(S_n \ge A)\le \exp\left({-\frac{A^2}{2\sum_{i=1}^n M_i^2}}\right)
  \]

  Using Azuma-Hoeffding,
    \begin{align*}\nonumber
    \mu(S_n(\nomean{h\upn})> 3 n\epsilon) & \le \mu(S_n(\nomean{g\upn})>
    n\epsilon) + 2 \mu(w\upn > n\epsilon) \\ &
      \le \exp(-\frac{n^2 \epsilon^2}{2
      n \infnorm{\nomean{g\upn}}^2}) + 2 \mu(w\upn > n\epsilon)
    \end{align*}
    and therefore 
    \begin{align}\nonumber
    \mu(S_n(\nomean{\phi})> 3 n\epsilon) & \le \mu(S_n(\nomean{h\upn})>
    3 n\epsilon) + n \mu(h\upn \not= \phi) \\\label{eq.LD-estimate_lip}
    &
      \le \exp(-\frac{n^2 \epsilon^2}{2
      n \infnorm{\nomean{g\upn}}^2}) + 2 \mu(w\upn > n\epsilon) + C_\mu n e^{-M_n}
    \end{align}

  Compute 
  \begin{equation}\label{eq.estimate_gn}
    \infnorm{\nomean{g\upn}} \le  \infnorm{h\upn}+2 \infnorm{w\upn}
  \end{equation}
  and, using the exponential decay of $P$ and that
  $\infnorm{P f}\le \infnorm{f}$,
  \begin{align}\nonumber
    \infnorm{w\upn} & =\infnorm{\sum_{k=1}^\infty P^k \nomean{h\upn}}\\\nonumber
                    & \le \sum_{k=1}^{C_n} \infnorm{P^k \nomean{h\upn}} +
                      \sum_{k=C_n+1}^\infty \lipnorm{P^k \nomean{h\upn}} \\\nonumber
                    & \le C_n M_n + C_P \sum_{k=C_n+1}^\infty e^{-\theta
                      k}\lipnorm{\nomean{h\upn}} \\\nonumber
                    & \le C_n M_n + C' e^{-\theta
                      C_n}\lipnorm{\nomean{h\upn}} \\\label{eq.estimate_wn}
                    & \le C_n M_n + C' e^{-\theta C_n} e^{M_n}
\end{align}


We want to bound each term in \eqref{eq.LD-estimate_lip} by
$\exp(-n^\alpha)$.
For the first term this requires
$\infnorm{\nomean{g\upn}}\approx n^{(1-\alpha)/2}$, hence, in view of
\eqref{eq.estimate_gn} and \eqref{eq.estimate_wn}, need
\begin{equation}\label{eq.constrant1}
  C_n M_n \approx n^{(1-\alpha)/2}\qquad - \theta C_n + M_n \lesssim
  \frac{1-\alpha}{2} \log n
\end{equation}
which lead to
\begin{equation*}
  M_n=n^{(1-\alpha)/4}\qquad C_n=\frac{1}{\theta} M_n =\frac{1}{\theta} n^{(1-\alpha)/4}
\end{equation*}

Then the second term in \eqref{eq.LD-estimate_lip} is zero because
$\infnorm{w\upn}\lesssim n^{(1-\alpha)/2}$, and the third term becomes
$n e^{-M_n}\approx \exp(-n^{(1-\alpha)/4})$.

Thus the best $\alpha$ comes from
$\max\min\{n^\alpha,n^{(1-\alpha)/4}\}=n^{1/5}$ for $\alpha=1/5$.
\end{proof}

\subsection{Exponential decay in $L^p$ for all $p\ge1$ does not imply
  exponential large deviations}

We now show that exponential decay of the transfer operator in each $L^p$
does not imply exponential large deviations. 


The next result is stated for the doubling map, but the proof applies, with
the appropriate changes, to any map having an a.c.i.p. with density bounded
above, and whose transfer operator has a spectral gap, e.g. on $BV$.

\begin{prop}\label{thm.exp-decay-Lp}
  Let $(T,X,\mu)$ be the doubling map on $X=[0,1]$, $P$ its transfer
  operator with respect to the Lebesgue measure. Denote by $e^{-\theta}$
  the exponential decay rate of its transfer operator on $BV$ functions of
  mean zero, and by $C_T$ the constant that is involved (see
  \eqref{eq.decay} for the precise meaning of $\theta$ and $C_T$).
  \begin{enumerate}
  \item[(a)] Let $\phi_1 (x):=(-\log x)^{\alpha}$, $\alpha >0$. Then for
    all $p\ge 1$, there is a constant $C_{\alpha p}$ such that
    \[
      \text{$\left\|P^n \left(\phi_1 -\int \phi_1 d x\right)\right\|_{L^p}
        \le e^{-\theta n} (C_T M_n+C_{\alpha p} M_n^\alpha)$, where
        $M_n=n p \theta$.}
    \]
    In particular, the decay is exponential with rate $e^{-\beta}$ for any
    $\beta<\theta$. The constant $C_{\alpha p}$ comes from
    \eqref{eq.log-alpha}.

  \item[(b)] Let $\phi_2(x):=x^{-\alpha}$ with $0< \alpha < 1$. Then for
    $1\le p < 1/\alpha$
    \[
      \text{
        $\left\|P^n \left(\phi_2 -\int \phi_2 d x\right)\right\|_{L^p}
        \lesssim e^{- n \theta(1-\alpha p)}$ as $n\to\infty$}
    \]
  \end{enumerate}
\end{prop}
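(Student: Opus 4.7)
The overall plan is a truncation argument that treats (a) and (b) on an equal footing. In each case I would write $\phi_i - \overline{\phi_i} = (h_n - \overline{h_n}) + (\delta_n - \overline{\delta_n})$, where $h_n := \min\{\phi_i, M_n\}$ is the truncation at a level $M_n$ to be chosen and $\delta_n := \phi_i - h_n$ is the tail. Since the doubling map preserves Lebesgue, $P$ is a contraction on every $L^p(\mu)$, so
\[
\bigl\|P^n(\phi_i - \overline{\phi_i})\bigr\|_{L^p} \;\le\; \bigl\|P^n(h_n - \overline{h_n})\bigr\|_{L^p} + 2\,\|\delta_n\|_{L^p}.
\]
The first summand is handled by the $BV$ spectral gap \eqref{eq.decay}, the second by a direct integration.

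For part (a), with $\phi_1(x) = (-\log x)^\alpha$ and $M_n = np\theta$, the truncation $h_n$ is monotone with range in $[0, M_n]$, so $\|h_n - \overline{h_n}\|_{BV} \lesssim M_n$, and \eqref{eq.decay} combined with $\|\cdot\|_{L^p} \le \|\cdot\|_{\infty} \le \|\cdot\|_{BV}$ gives the first summand $\le C_T M_n e^{-\theta n}$, matching the $C_T M_n$ term. The tail $\delta_n$ is supported on $[0, e^{-M_n^{1/\alpha}}]$, and substituting $v = -\log x$ gives
\[
\|\delta_n\|_{L^p}^p \;=\; \int_{M_n^{1/\alpha}}^{\infty} (v^\alpha - M_n)^p\, e^{-v}\,dv.
\]
Shifting $v = M_n^{1/\alpha} + w$ and applying the mean-value theorem to $(M_n^{1/\alpha}+w)^\alpha - M_n$ produces a Gamma-type integral that I would bound by $\lesssim_{\alpha,p} M_n^{p\alpha} e^{-M_n^{1/\alpha}}$; this is the content of \eqref{eq.log-alpha} and yields the second summand $C_{\alpha p} M_n^{\alpha} e^{-\theta n}$.

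For part (b), the same scheme applied to $\phi_2 = x^{-\alpha}$ with $h_n = \min\{\phi_2, M_n\}$ gives $\delta_n$ supported on $[0, M_n^{-1/\alpha}]$. The $BV$ piece is again $\lesssim C_T M_n e^{-\theta n}$. The tail is only polynomial: using the hypothesis $p < 1/\alpha$ for integrability,
\[
\|\delta_n\|_{L^p}^p \;\le\; \int_0^{M_n^{-1/\alpha}} x^{-\alpha p}\,dx \;\lesssim\; M_n^{-(1-\alpha p)/\alpha},
\]
so $\|\delta_n\|_{L^p} \lesssim M_n^{1 - 1/(\alpha p)}$. Unlike in (a), the two contributions depend on $M_n$ in opposite directions, so one optimizes: balancing $M_n e^{-\theta n} \sim M_n^{1 - 1/(\alpha p)}$ forces $M_n \sim e^{\alpha p \theta n}$, and both terms become comparable to $e^{-\theta n (1 - \alpha p)}$, as claimed.

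The main obstacle is the tail estimate in part (a): the integral $\int_{M_n^{1/\alpha}}^{\infty}(v^\alpha - M_n)^p e^{-v}\,dv$ behaves qualitatively differently in the three regimes $\alpha < 1$, $\alpha = 1$ and $\alpha > 1$, and one must verify that with the universal choice $M_n = np\theta$ the outcome really fits into the form $C_{\alpha p} M_n^{\alpha} e^{-\theta n}$ claimed in the proposition, with all the extra polynomial factors produced by the mean-value expansion absorbed into $C_{\alpha p}$. Part (b) is then just an elementary optimization once the polynomial tail estimate and the $BV$ bound are in place.
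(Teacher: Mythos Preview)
Your proposal follows essentially the same truncation strategy as the paper: split $\phi_i-\overline{\phi_i}$ into a bounded $BV$ piece controlled by the spectral gap \eqref{eq.decay} and an $L^p$-tail controlled by the contraction of $P$ on $L^p$, then choose $M_n$ to make the two terms match. The paper's tail computation in \eqref{eq.log-alpha} is slightly cruder than yours---it simply bounds $|\phi_1-M_n|\le|\log x|^{\alpha}$ and integrates, rather than shifting the variable and using the mean-value theorem---but the structure and the optimization in (b) are identical.
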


\begin{proof}

  \newcommand{\psio}{\nomean{\psi}}

  The transfer operator $P$ is a contraction in each $L^p$, $p \ge 1$, and
  has exponential decay on functions in $BV$ with mean zero; so can find
  $C_T, \theta>0$ such that for $f\in L^1$ \footnote{These inequalities
    also hold when $f$ is not in $L^p$ or $BV$, in that case the RHS is
    infinity.}
 \begin{equation}\label{eq.decay}
   \|P f \|_p \le \|f\|_p, \qquad
   \left\|P^n \left(\phi -\int \phi d x\right)\right\|_{BV} 
   \le C_T e^{-\theta n} \left\|P^n \left(\phi -\int \phi d x\right)\right\|_{BV} 
\end{equation}

For either $\phi_k$, $k=1, 2$, substract for simplicity its mean, so denote
$\psio:=\phi_k -m_k$ where $m_k=\int \phi_k d x$, and let
$\psi_n:=\min\{\psio, M_n\}$; in what follows we will ignore this constant
$m_k$, it does not change much.

 Then $\|\psi_n\|_{BV}=M_n$ (more precisely, $M_n+m_k$), and
 \begin{equation}
   \begin{aligned}\nonumber  
     \|P^n\psio\|_p & \le \|P^n\psi_n\|_p + \|P^n(\psio-\psi_n)\|_p    \\
     &\le \|P^n\psi_n\|_{BV} + \|\psio-\psi_n\|_p \le C_T e^{-\theta n}
     \|\psi_n\|_{BV} + \|\psio-\psi_n\|_p \\
     & \le C_T e^{-\theta n} M_n + \|\psio-\psi_n\|_p
   \end{aligned}
 \end{equation}

 Now compute that for $\phi_1 (x):=(-\log x)^\alpha$, $\alpha>0$ (ignoring
 again $m_1$):
 \begin{align}\nonumber  
     \|\psio-\psi_n\|_p & = \left(\int_{\phi_1>M_n} |\phi_1-M_n|^p d x
     \right)^{1/p} \le \left(\int_{0 < x < e^{-M_n}} |\log(x)|^{\alpha p} d
       x
     \right)^{1/p}\\
     \label{eq.log-alpha}
     & \le C_{\alpha p} \left( x |\log(x)|^{\alpha p}
       \Big\vert^{x=e^{-M_n}}_{x=0} d x \right)^{1/p} = C_{\alpha p}
     e^{-M_n/p} M_n^{\alpha}
 \end{align}
   Take $M_n= n p \theta$ to get the desired decay in $L^p$.

   Doing the same for $\phi_2 (x):=x^{-\alpha}$, $0<\alpha<1$,
   $1\le p < 1/\alpha$:
 \begin{equation}
   \begin{aligned}\nonumber  
     \|\psio-\psi_n\|_p & \le \left(\int_{0 < x < M_n^{-1/\alpha}}
       x^{-\alpha p} d x \right)^{1/p} = \frac{1}{(1-\alpha p)^{1/p}}
     M_n^{1-\frac {1}{\alpha p}}
   \end{aligned}
 \end{equation}
 Take $M_n=\exp({ p \alpha \theta n})$ to get exponential decay at rate
 $e^{-\theta (1-\alpha p)}$.
\end{proof}

\section{Exponential large deviations without a rate function for bounded
  observables.}

Examples exist in the literature~\cite{Bradley,Orey_Pelikan,Bryc, Chung} of
stationary processes which have exponential large deviations but a rate
function does not exist. In particular there is an example of a mean zero
bounded function $f$ taking only $3$ values on an aperiodic recurrent
Markov chain $(X_n)$ with a countable state space such that the system has
exponential large deviations but does not have a rate function. In this
example defining $S_n=\sum_{j=0}^{n-1} f(X_j)$ for all $\epsilon >0$, there
exist constants $C(\epsilon)$, $0<\gamma<1$ such that
$\mathbb{P}( |\frac{S_n}{n}|>\epsilon)\le C(\epsilon)e^{-\gamma n}$, giving
exponential convergence in the strong law of large numbers yet there is no
rate function controlling the rate of decay. We show in the next section
that in these examples $f$ is a coboundary and there exists
$\psi \in L^2 (\mathbb{P})$ such that $f(X_j)=\psi (X_{j+1})-\psi (X_j)$
for all $j\ge 0$ so that the example is degenerate (the variance
$\sigma^2=0$). The assumption that the observable is not a coboundary (and
hence that $\sigma^2>0$) is made in the statements of the theorems establishing large deviations
with rate functions in~\cite{Nic}.

According to Bryc and Smolenski~\cite{Bryc} the idea of the example was due
to
\break
Bradley~\cite{Bradley} and also adapted by Orey and
Pelikan~\cite[Example 4.1]{Orey_Pelikan}. 
\break
Bradley~\cite{Bradley} produced
an example of a stationary, pairwise independent, absolutely regular
stochastic process for which the central limit theorem does not hold. Orey
and Pelikan presented this system as an example of a strongly mixing shift
for which the large deviation principle with rate function fails. Bryc and
Smolenski showed that there is in fact also an exponential convergence in
the strong law of large numbers. Bryc and Smolenski's work was recast by
Chung~\cite{Chung} into dynamical systems language, and the system was
expressed as a Young Tower $(F,\Delta,\nu)$. We will also recast as a
dynamical system and show that $f$ is a coboundary, in fact
$f=\psi\circ F-\psi$ where $\psi$ is unbounded but $\psi\in L^2$. This
seems to have been overlooked in the literature. In fact if $f$ were not a
coboundary the example would contradict results of
\cite{Nic,Reybellet_Young} which imply that any bounded Lipschitz function
on a Young Tower with exponential tails which is not a coboundary has
exponential large deviations with a rate function. In Section~\ref{Bryc} we
also present the example in a dynamical setting following Bryc and
Smolenski's notation and overall presentation and give an explicit
coboundary for $f$. As far as we know there is no example of a bounded
observable on a dynamical system, with non-zero variance, which has
exponential large deviations and yet no rate function.

\subsection{The example of Bryc and Smolenski}\label{Bryc}

 Let $\Delta_0$ (which for concreteness we will identify
with the unit interval $[0,1]$) be the base of a Young Tower  $\Delta$ with $\Delta_0$  partitioned into intervals $\Lambda_0, \Lambda_1,\ldots,
\Lambda_k...,$. Let $n(0)=1/2$, $n(k)=12^k$, $p_k=Ce^{-\frac{n(k)}{2}}$ where $C^{-1}=\sum_{k=0}^{\infty}e^{-\frac{n(k)}{2}}$ is a normalization constant to 
ensure the Tower has a probability measure. Each $\Lambda_k$ will have Lebesgue  measure $p_k$ and  we define the 
Tower return time function  on $\Lambda_k$ as $R_{\Lambda_k}:=R(k)=2n(k)$. We now build the Tower
$\Delta$ above the base. We write $\Lambda_{k,0}:=\Lambda_k$ and define, for $0\le j\le R(k)-1$ the levels $\Lambda_{k,j}$ of the Tower
lying above $\Lambda_k$ by
\[
\Delta = \bigcup_{k\in{N}^+, 0\le j \le R_k-1}\{ (x, j) : x\in \Lambda_{0,k }\}
\] 
 with the  tower map $F:\Delta \to \Delta $ given by
\[
F(x, j) = \begin{cases}(x, j+1) & \mbox{ if }  x\in \Lambda_{k,0}, j<R(k)-1\\
(T_k x, 0) &\mbox{ if } x\in \Lambda_{k,0}, j = R(k)-1\end{cases}.
\]
where $T_k$ has constant derivative and maps the interval $\Lambda_{k,0}$ bijectively onto
$\Delta_0$.  We define $F$ on $\Lambda_{0,0}$ by requiring that $F$ map  $\Lambda_{0,0}$ bijectively onto
$\Delta_0$. Note that $m(\Lambda_{0,0})=Ce^{-1/4}$. This requirement on the height $R(0)$ of $\Lambda_{0,0}$ to be $1$ is to
ensure aperiodicity.

We lift Lebesgue measure $m$ from the base to the Tower to obtain a measure $\tilde{\nu}$ on $\Delta$ and then
define a probability measure $\nu =\tilde{\nu}$ by normalization. The map $F$ preserves $\nu$ and is exponentially mixing for  a Banach space of
observables on $\Delta$~\cite{LY98}.

If $k\not =0$ we define $f: \lambda_{k,j} \to \{-1,0,1\}$ by
\[
f (x, j) = \begin{cases}1
 & \mbox{ if }  x\in \Lambda_{k}, j \le n(k)-1\\
-1 &\mbox{ if } x\in \Lambda_{k}, n(k) \le j \le 2n(k)-1\end{cases}.
\]
if $k=0$ we take  $f(0,0)=0$. This is the example model of~\cite{Bradley, Orey_Pelikan, Bryc, Chung}.

Now define a function $\psi$, which will be a coboundary for $f$, by 
\[
\psi  (x, j) = \begin{cases}j
 & \mbox{ if }  x\in \Lambda_{k},0\le  j \le n(k)\\
2n(k)-j &\mbox{ if } x\in \Lambda_{k}, n(k) < j \le 2n(k)-1\end{cases}.
\]
and we take $\psi(0,0)=0$.

It is easy to check that
 \[
f=\psi\circ F -\psi
\] 

Hence $S_n (f)=\psi\circ F^n -\psi$.  We remark that, as $f=\psi\circ F-\psi$ is a  coboundary and $\psi\in L^2$, $S_n (f)$ has zero-variance and does not satisfy a non-trivial central limit theorem.

Since 
\[
\nu (\psi\circ F^n >n)=\nu (\psi >n) \sim e^{-n/2}
\]
 it is  easy to see that $\nu (|S_n/n |>\epsilon)$ decays exponentially for any $\epsilon$ (and obtain explicit estimates).

In~\cite[Theorem 2.2]{Varadhan} Varadhan shows that large deviations with rate function fails if 
$\frac{1}{n} \log \mathbb{E}(\exp(S_n))$ does not have a limit as 
$n\to \infty$ and it is easy to show (see also~\cite{Bryc}) that $\frac{1}{n} \log \mathbb{E}(e^{S_n})$ 
has no limit in this example. 

For example consider points in $x\in \Lambda_{k,0}$ where
$\nu(\Lambda_{k,0})=Ce^{-\frac{n(k)}{2}}$. Then
$S_{n(k)}(x)=\psi\circ F^{n(k)}=n(k)$ so that $e^{S_{n(k)} (x)}= e^{n(k)}$.
Hence
\[
\frac{1}{n(k)} \log \mathbb{E}(e^{S_{n(k)}}))= \frac{1}{2n(k)}\log
(Ce^{-\frac{n(k)}{2}}e^{n(k)})=\frac{1}{4}+\frac{\log C}{n(k)}.
\]
Thus $\limsup \frac{1}{n} \log \mathbb{E}(e^{S_n})\ge \frac{1}{4}$. Similar
considerations show that
$\liminf \frac{1}{n} \log \mathbb{E}(e^{S_n})\le 0$ and consequently
$\frac{1}{n} \log \mathbb{E}(e^{S_n})$ has no limit.

\section{Discussion.}

There are a number of open yet natural questions. 

(1)  If $p$ is a non-periodic point does the observable $\varphi(x)=-\log d(x,p)$  on  a uniformly expanding map of the interval satisfy exponential large deviations? Does exponential large deviations  for  $\varphi(x)=-\log d(x,p)$
hold for a full measure set of $p$? 

(2) Are there natural conditions on the decay rate of the  tail of the distribution $\mu (\varphi>t)$ which imply
exponential large deviations if $\varphi$ is an unbounded observable on a uniformly expanding map? Theorem 3.1 shows that we cannot expect to have exponential large deviations with a rate function.

(3) Is there an example of a non-degenerate bounded observable $\varphi$,
$\varphi\not =\psi-\psi\circ T$, on a (smooth) dynamical system which
satisfies exponential large deviations but does not have a rate function?

\section{Appendix}

We describe Schindler's result that we use, {\cite[Lemma 6.15]{Schindler}},
which states that Property $\mathfrak{D}$ for
$(\Omega, \mathcal{B}, T, \mu, \mathcal{F}, \|\cdot\|, \chi)$ implies
Property \textbf{A} for $(\chi\circ T^{n-1})_n$, with {Property
  $\mathfrak{D}$} defined in~\cite[Definition 1.7 in Section
1.2]{Schindler} and Property \textbf{A} in~\cite[Definition
2.2]{Schindler}.
We state this result only for the situation we are interested in, namely
the Banach space $\mathcal{F}$ being $BV([0,1)]$. We will use
$\|f\|_{BV}:=\operatorname{var}(f)+|f|_\infty$ with $\operatorname{var}$
denoting the total variation seminorm.

For a function $\chi:\Omega\to \mathbb{R}$ and $\ell\in\mathbb{R}$, denote
its truncation by
$\chi^{\ell}:=\chi\cdot\mathbbm{1}_{\left\{\chi\leq \ell\right\}}$.

\begin{prop}[{\cite[Lemma 6.15]{Schindler}}]\label{thm.schindler}
  Let $T:[0,1]\to [0,1]$ be a transformation and $\mu$ a $T$-invariant
  probability measure for which $T$ is non-singular; denote by $P$ the
  transfer operator associated to $T$ with respect to the measure $\mu$.
  Assume:
  \begin{itemize}
  \item[(a)] the $T$-invariant probability measure $\mu$ is mixing;

  \item[(b)] $\widehatT$ is bounded on $BV[(0,1)]$ and has a spectral gap
    on $BV$ (that is, the spectral radius of $\widehatT$ restricted to the
    codimension one subspace $BV\cap \{f \mid \int f d\mu = 0 \}$ is less
    than 1);

  \item[(c)] $\chi\in L^1(\mu)$ with $\chi\ge 0$ and
    $|\chi|_{L^\infty} = \infty$;

  \item[(d)] there exists $C>0$ such that for all $\ell>0$,
    \begin{align}\nonumber
      \left\|\chi^{\ell}\right\|_{BV}\leq C\cdot \ell
    \end{align}
    where $X^f:=X\cdot \one_{X\le f}$.
  \end{itemize}
  
  Then,
  for every sequence $(\xi_n)_{n\in\N}$ tending monotonically to zero and
  for every $\epsilon>0$, there exists $N\in \N$ such that for every
  positive valued sequence $(f_m)$:
  \[
    n\ge N \implies \mathbb{P}\left(\left|T^{f_n}_n
        -\mathbb{E}\left(T^{f_n}_n\right)\right| >\epsilon
      \mathbb{E}\left(T^{f_n}_n\right)\right) \le 2
    \exp\!\left(-\xi_n\frac{\mathbb{E}\left(T^{f_n}_n\right)}{f_n}\right)
  \]
  where $T^{f_n}_n:=\sum_{k=1}^n \chi^{f_n} \circ T^k$ is the Birkhoff sum
  of $n$ truncated terms.

  [Note that if $\chi\ge 0$ is bounded then we get this estimate with
  $f_n= |\chi|_{L^\infty(\mu)}$.]
\end{prop}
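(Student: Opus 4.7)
My plan is to combine the standard Chernoff device with Keller--Liverani perturbation theory for the transfer operator, using the spectral gap on $BV$ guaranteed by (b) and controlling the perturbation via the truncation bound in (d).

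First I would work out the moment-generating function of $T_n^{f_n}$. For $t \in \mathbb{R}$ define the twisted transfer operator $P_t: BV\to BV$ by $P_t v := P(e^{t \chi^{f_n}} v)$; iterating the duality $\int P g \cdot h \, d\mu = \int g \cdot (h\circ T)\, d\mu$ gives the well-known identity $\int e^{t T_n^{f_n}}\, d\mu = \int P_t^n \one\, d\mu$. By hypothesis (d), $\|e^{t \chi^{f_n}} - 1\|_{BV} \le C_1 (e^{|t| f_n} - 1)$, so $\|P_t - P\|_{BV\to BV}$ is $O(|t| f_n)$ while $|t| f_n \le 1$. Because $P$ has a spectral gap on $BV$, Keller--Liverani perturbation theory produces a simple leading eigenvalue $\lambda(t)$ of $P_t$ with $\lambda(0) = 1$ and $\lambda'(0) = \mathbb{E}(\chi^{f_n})$; bounding the quadratic coefficient by the asymptotic variance of $\chi^{f_n}$, which is at most $C_2 f_n \mathbb{E}(\chi^{f_n})$ (using the pointwise inequality $(\chi^{f_n})^2 \le f_n \chi^{f_n}$), one obtains
\[
\log \lambda(t) \le t\,\mathbb{E}(\chi^{f_n}) + K t^2 f_n \mathbb{E}(\chi^{f_n}) \qquad (|t| f_n \le c_0),
\]
with $K, c_0$ depending only on $C_1$ and on the spectral gap of $P$, not on $n$.

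Next I would apply a Chernoff bound. Writing $E_n := \mathbb{E}(T_n^{f_n}) = n \mathbb{E}(\chi^{f_n})$, for $0 < t \le c_0/f_n$,
\[
\mu(T_n^{f_n} - E_n > \epsilon E_n) \le e^{-t(1+\epsilon) E_n}\, \mathbb{E}[e^{t T_n^{f_n}}] \le C_3 \exp\bigl(-t \epsilon E_n + K t^2 f_n E_n\bigr),
\]
the prefactor $C_3$ being the norm of the spectral projection, bounded uniformly in the perturbative regime. Choose $t = 2\xi_n / (\epsilon f_n)$; since $\xi_n \to 0$ monotonically, eventually $t f_n \le c_0$ and $K t f_n \le \epsilon/2$, giving exponent at most $-\xi_n E_n / f_n$. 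The lower tail is handled by taking $t<0$, contributing the factor $2$ in the final estimate.

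The main obstacle is making the Keller--Liverani perturbation uniform in $n$, since both the observable $\chi^{f_n}$ and the operator $P_t$ depend on $n$ through $f_n$. The key point is that only the regime $t = O(\xi_n/f_n)$ enters the proof; there $\|P_t - P\|_{BV\to BV} = o(1)$ uniformly, so a single perturbation constant $c_0$ suffices, and the quadratic-in-$t$ control of $\log\lambda(t)$ follows from the standard second-order expansion once one bounds the variance term via $(\chi^{f_n})^2 \le f_n \chi^{f_n}$. This is precisely what makes the hypothesis $\xi_n \to 0$ necessary: the loss of the factor $\xi_n$ in the exponent is the price paid to keep the perturbation safely inside the analytic regime of $t \mapsto P_t$.
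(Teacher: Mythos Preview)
The paper does not prove this proposition at all: it is stated in the Appendix purely as a quotation of Schindler's Lemma~6.15, with the paper only remarking that hypotheses (a)--(d) are a specialization of Schindler's abstract ``Property~$\mathfrak{D}$'' and the conclusion is her ``Property~\textbf{A}''. There is therefore no proof in the paper to compare your attempt against.

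On its own merits, your Chernoff\,/\,perturbed-transfer-operator outline is a sound and standard route to such an inequality, and the key observations---that $(\chi^{f_n})^2\le f_n\chi^{f_n}$ controls the variance term, and that only the regime $|t|f_n=O(\xi_n)$ is needed so the perturbation stays uniformly small independently of the sequence $(f_n)$---are the right ones. Two small points to clean up: first, since $\|P_t-P\|_{BV\to BV}$ is already small in the \emph{same} norm, ordinary analytic (Kato) perturbation theory suffices and invoking Keller--Liverani is unnecessary; second, your bound carries a prefactor $C_3$ from the spectral projection while the stated conclusion has only the factor $2$, so you should note explicitly that either $\xi_n E_n/f_n\le\log 2$ (in which case the bound is trivial) or you can tighten the choice of $t$ slightly (e.g.\ take $Ktf_n\le\epsilon/4$) to gain an extra $\tfrac12\xi_n E_n/f_n$ in the exponent and absorb $C_3$.
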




\providecommand{\MR}[1]{}
\providecommand{\bysame}{\leavevmode\hbox to3em{\hrulefill}\thinspace}
\providecommand{\MR}{\relax\ifhmode\unskip\space\fi MR }
\providecommand{\MRhref}[2]{%
  \href{http://www.ams.org/mathscinet-getitem?mr=#1}{#2}
}
\providecommand{\href}[2]{#2}

\end{document}